\newtheorem{Thm}{Theorem} 
\newtheorem{Lem}[Thm]{Lemma}
\theoremstyle{definition}
\numberwithin{equation}{section}
\renewcommand{\phi}{\varphi}
\newcommand{\pcore}{\mathrm{O}}
\newcommand{\FC}{\mathcal{F}}
\newcommand{\Ind}{\mathrm{Ind}}
\newcommand{\ZZ}{\mathbb{Z}}
\newcommand{\QQ}{\mathbb{Q}}
\newcommand{\NN}{\mathbb{N}}
\newcommand{\FF}{\mathbb{F}}
\newcommand{\Aut}{\operatorname{Aut}}
\newcommand{\PGL}{\operatorname{PGL}}
\newcommand{\PSL}{\operatorname{PSL}}
\newcommand{\Irr}{\operatorname{Irr}}
\newcommand{\Ker}{\operatorname{Ker}}
\title{Fusion invariant characters of $p$-groups}
\author{Benjamin Sambale\footnote{Institut für Algebra, Zahlentheorie und Diskrete Mathematik, Leibniz Universität Hannover, Welfengarten 1, 30167 Hannover, Germany,
\href{mailto:sambale@math.uni-hannover.de}{sambale@math.uni-hannover.de}}}
\date{\today}
\begin{document}
\frenchspacing
\maketitle
\begin{abstract}\noindent
We consider complex characters of a $p$-group $P$, which are invariant under a fusion system $\FC$ on $P$. Extending a theorem of Bárcenas--Cantarero to non-saturated fusion systems, we show that the number of indecomposable $\FC$-invariant characters of $P$ is greater or equal than the number of $\FC$-conjugacy classes of $P$. We further prove that these two quantities coincide whenever $\FC$ is realized by a $p$-solvable group. On the other hand, we observe that this is false for constrained fusion systems in general. Finally, we construct a saturated fusion system with an indecomposable $\FC$-invariant character, which is not a summand of the regular character of $P$. This disproves a recent conjecture of Cantarero--Combariza.
\end{abstract}

\textbf{Keywords:} Fusion systems, invariant characters\\
\textbf{AMS classification:} 20C15, 20D20

\section{Introduction}

A fusion system $\mathcal{F}$ on a finite $p$-group $P$ is a category, whose objects are the subgroups of $P$ and whose morphisms are injective group homomorphisms satisfying certain technical conditions (we refer the reader to \cite{AKO} for the details). For the moment, we do not require that $\mathcal{F}$ is saturated. Elements $x,y\in P$ are called $\mathcal{F}$-\emph{conjugate} if there exists a morphism $f:\langle x\rangle \to P$ in $\FC$ such that $f(x)=y$. We denote the number of $\mathcal{F}$-conjugacy classes of $P$ by $k(\mathcal{F})$. 
A complex class function $\chi$ of $P$ is called $\mathcal{F}$-\emph{invariant} if $\chi$ is constant on the $\mathcal{F}$-conjugacy classes of $P$. These characters can often be used to construct new characters of finite groups via the Broué--Puig \mbox{$*$-construction} introduced in \cite{BrouePuigA}. Further motivation and background can be found in the recent paper of Cantarero--Combariza~\cite{Cantarero2}.

We call an $\mathcal{F}$-invariant character of $P$ \emph{indecomposable} if it is not the sum of two non-zero $\mathcal{F}$-invariant characters (this is unrelated to the characters of indecomposable modules of the group algebra). Let $\Ind_{\mathcal{F}}(P)$ be the set of indecomposable $\mathcal{F}$-invariant characters of $P$. 
In the theory of lattices, $\Ind_{\mathcal{F}}(P)$ is sometimes called the \emph{Hilbert basis} of the semigroup of $\FC$-invariant characters. As a consequence, $\Ind_{\mathcal{F}}(P)$ is finite (see \autoref{lemgordan} below).

Our first theorem gives a lower bound on $|\Ind_{\mathcal{F}}(P)|$. This was previously proved by Bárcenas and Cantarero in \cite[Lemma~2.1]{Cantarero} for saturated fusion systems. 

\begin{Thm}\label{thmHilbert}
The space of $\FC$-invariant class functions of $P$ is spanned by $\Ind_\FC(P)$. In particular, $|\Ind_{\mathcal{F}}(P)|\ge k(\mathcal{F})$. 
\end{Thm}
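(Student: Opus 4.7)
The plan splits into two reductions plus a core spanning claim. By the upcoming Hilbert basis lemma \autoref{lemgordan}, the monoid $\mathrm{Ch}(\FC)$ of $\FC$-invariant characters of $P$ is finitely generated with unique minimal generating set $\Ind_\FC(P)$; hence every $\FC$-invariant character is a non-negative integer combination of elements of $\Ind_\FC(P)$, yielding $\mathrm{span}_\CC(\Ind_\FC(P))=\mathrm{span}_\CC(\mathrm{Ch}(\FC))$. Let $V$ denote the space of $\FC$-invariant class functions; it has $\CC$-dimension $k(\FC)$ with basis given by the indicators of the $\FC$-classes, so the inequality $|\Ind_\FC(P)|\ge k(\FC)$ follows once we establish $\mathrm{span}_\CC(\mathrm{Ch}(\FC))=V$.

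For the spanning claim I would first reduce to $\FC$-invariant generalized characters. The regular character $\rho_P$ of $P$ is $\FC$-invariant (it vanishes off $1$, and every $\FC$-morphism fixes $1$); given any $\FC$-invariant generalized character $\theta$, adding $N\rho_P$ for $N$ sufficiently large makes all coefficients in the $\Irr(P)$-basis non-negative, producing an $\FC$-invariant character. Thus $\theta$ is a difference of two elements of $\mathrm{Ch}(\FC)$, and $\mathrm{span}_\CC(\mathrm{Ch}(\FC))$ coincides with the $\CC$-span of all $\FC$-invariant generalized characters. Next I would observe that $W:=\mathrm{span}_\CC(\mathrm{Ch}(\FC))$ is a unital subalgebra of $V$ under pointwise multiplication, since the tensor product of two characters is a character, products of $\FC$-invariant functions are $\FC$-invariant, and $1_P\in\mathrm{Ch}(\FC)$. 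As $V\cong\CC^{k(\FC)}$ is a commutative semisimple algebra (via evaluation on the $\FC$-classes), its unital subalgebras correspond bijectively to partitions of the set of $\FC$-classes, and so $W=V$ is equivalent to $\mathrm{Ch}(\FC)$ separating these $\FC$-classes.

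The main obstacle is exactly this separation statement: for every pair $x,y\in P$ that are not $\FC$-conjugate, some $\FC$-invariant character $\chi$ must satisfy $\chi(x)\ne\chi(y)$. Equivalently, every indicator $\mathbf{1}_{\hat C}$ of an $\FC$-class $\hat C$ should lie in $W$. Beginning from the column-orthogonality expansion
\[
|P|\,\mathbf{1}_{\hat C}\;=\;\sum_{\chi\in\Irr(P)}\Bigl(\sum_{g\in\hat C}\overline{\chi(g)}\Bigr)\chi,
\]
my strategy is to reorganize the right-hand side as a $\CC$-combination of block sums $\sum_{\chi\in O}\chi$ indexed by suitable ``fusion orbits'' $O$ of $\Irr(P)$, each of which is already an $\FC$-invariant character. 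The delicate point is to pin down the correct fusion-orbit structure: it must be coarser than the ordinary $\Aut_\FC(P)$-orbits on $\Irr(P)$, so as to absorb the element-identifications produced by $\FC$-morphisms between proper subgroups of $P$. Once such a partition of $\Irr(P)$ into $\FC$-invariant block sums is constructed, checking that the reorganized expansion reproduces $|P|\mathbf{1}_{\hat C}$ and that the resulting block sums collectively span $V$ closes the argument.
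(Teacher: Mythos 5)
Your preliminary reductions are all fine: the span of $\Ind_\FC(P)$ equals the span of all $\FC$-invariant characters (every invariant character decomposes into indecomposables by induction on the degree), adding a large multiple of the $\FC$-invariant regular character shows that this span also contains every $\FC$-invariant generalized character, and the reformulation via unital subalgebras of $\CC^{k(\FC)}$ correctly reduces everything to the separation statement. But the entire content of the theorem sits in that separation statement, and the strategy you propose for it cannot work. If $\Irr(P)$ could be partitioned into blocks $O$ whose sums $\sum_{\chi\in O}\chi$ are all $\FC$-invariant, then in particular the total sum $\sum_{\chi\in\Irr(P)}\chi$ would be $\FC$-invariant. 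This already fails for the paper's own first example, the fusion system of $\PGL(2,7)$ on $P\cong D_{16}$: there $\sum_{i=1}^{7}\chi_i$ takes the value $2$ at $x^4$ and $0$ at $y$, although $x^4$ and $y$ are $\FC$-conjugate. More fundamentally, an $\FC$-morphism between proper subgroups of $P$ induces no action on $\Irr(P)$, so there is no candidate ``fusion orbit'' structure to be pinned down; your sketch essentially restates the theorem as the separation property and leaves it unproved.

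The paper supplies the missing ingredient from outside: by Park's realization theorem \cite{ParkExoticity2}, every (not necessarily saturated) fusion system $\FC$ on $P$ is induced by conjugation in some finite overgroup $G\ge P$. The $\FC$-classes are then the intersections with $P$ of the $G$-classes, the submatrix of the character table of $G$ formed by the columns meeting $P$ has full rank $k(\FC)$ because the whole table is invertible, and hence the restrictions $\chi_P$ for $\chi\in\Irr(G)$ --- which are $\FC$-invariant characters --- already span the $k(\FC)$-dimensional space of $\FC$-invariant class functions. Some realization result of this kind is exactly what lets the paper go beyond the saturated case treated by B\'arcenas--Cantarero, and it is absent from your argument; without it I do not see how your approach produces enough $\FC$-invariant characters to separate the $\FC$-classes.
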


Cantarero and Combariza have proven in \cite[Lemma~2.17]{Cantarero2} that $|\Ind_\FC(P)|=k(\FC)$ holds for controlled fusion systems (among other cases). A controlled fusion system is realized by a group of the form $P\rtimes H$ for some $p'$-group $H$. Our second theorem generalizes this result to the larger class of $p$-solvable groups. 

\begin{Thm}\label{psolv}
Let $\FC$ be the (saturated) fusion system on a Sylow $p$-subgroup $P$ of a $p$-solvable group $G$. Then $|\Ind_\FC(P)|=k(\FC)$. 
\end{Thm}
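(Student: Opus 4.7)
By Theorem~\ref{thmHilbert}, $|\Ind_\FC(P)| \ge k(\FC)$, so it remains to show the reverse inequality. My plan is to argue by induction on $|G|$. The base case $G = P$ is trivial, since then $\FC = \FC_P(P)$, $\Ind_\FC(P) = \Irr(P)$, and $k(\FC) = k(P)$.

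For the inductive step, the first move is the standard reduction to the case $\pcore_{p'}(G) = 1$. If $K := \pcore_{p'}(G) \ne 1$, then the canonical map $P \to PK/K$ is an isomorphism (as $P \cap K = 1$), and a Frattini-type argument using $G = K\N_G(P)$ shows that this is in fact an isomorphism between the fusion system $\FC$ on $P$ and the fusion system $\bar\FC$ on $PK/K$ induced by $G/K$. Hence $k(\FC) = k(\bar\FC)$ and $|\Ind_\FC(P)| = |\Ind_{\bar\FC}(PK/K)|$, so the result follows by applying the inductive hypothesis to the smaller $p$-solvable group $G/K$. From now on assume $\pcore_{p'}(G) = 1$, so that $Q := \pcore_p(G) \ne 1$ and $\C_G(Q) \le Q$.

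The strategy from here is to exhibit exactly $k(\FC)$ indecomposable $\FC$-invariant characters using restrictions of irreducible characters of $G$. For any $\chi \in \Irr(G)$, the restriction $\chi|_P$ is $\FC$-invariant, and the $\CC$-span of these restrictions is the entire space of $\FC$-invariant class functions, of dimension $k(\FC)$ (expand each indicator function of a $G$-orbit on $P$ in the $\chi|_P$). One organizes $\Irr(G)$ via Clifford theory relative to $Q$: for each $P$-invariant $\theta \in \Irr(Q)$ the inertia group $T := T_G(\theta)$ contains $P$, and the characters of $G$ above $\theta$ are the inductions $\Ind_T^G \psi$ for $\psi \in \Irr(T)$ above $\theta$. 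When $T < G$, the inductive hypothesis for the $p$-solvable subgroup $T$ (with Sylow $p$-subgroup $P$) yields the indecomposable contributions from this Clifford block; when $T = G$, one relies on canonical extensions of $\theta$ to $G$ available in the $p$-solvable setting, for example via coprime action or the Glauberman--Isaacs correspondence.

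The main obstacle, in my view, is proving that the $k(\FC)$ distinguished restrictions constructed in this way are linearly independent over $\CC$ -- equivalently, that the cone of $\FC$-invariant characters of $P$ is simplicial of dimension $k(\FC)$. As the abstract indicates, the equality $|\Ind_\FC(P)| = k(\FC)$ can fail even for constrained fusion systems realized by non-$p$-solvable groups, so this step must rely essentially on the $p$-solvability hypothesis. The most delicate point is controlling the overlap between restrictions from distinct Clifford blocks and using the canonical nature of the chosen extensions to rule out any additional indecomposable summands beyond those predicted by the Clifford decomposition.
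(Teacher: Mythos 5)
Your proposal establishes the easy inequality (via \autoref{thmHilbert}) and a legitimate reduction to $\pcore_{p'}(G)=1$, but it does not prove the theorem: the entire content of the statement is the reverse inequality $|\Ind_\FC(P)|\le k(\FC)$, and your text explicitly leaves this as an unresolved ``obstacle.'' The Clifford-theoretic plan as sketched has concrete problems. When $T:=T_G(\theta)<G$, the fusion system that $T$ induces on $P$ is in general strictly smaller than $\FC$, so the inductive hypothesis for $T$ gives you indecomposable characters for the wrong fusion system; moreover $(\Ind_T^G\psi)_P$ is governed by a Mackey sum over $(T,P)$-double cosets and is not simply $\psi_P$, so indecomposability is not transported along induction. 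Finally, producing $k(\FC)$ linearly independent restrictions is not enough: by \autoref{lemunique} you must show that \emph{no} indecomposable $\FC$-invariant character exists outside your list, i.e.\ that factorization is unique, and nothing in the proposal addresses how canonical extensions would rule out extra Hilbert basis elements.

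The paper closes exactly this gap by quoting Isaacs' theory of $\pi$-partial characters with $\pi=\{p\}$: an indecomposable $\FC$-invariant character of $P$ extends uniquely to a $G$-class function on the set of $p$-elements, this extension is an irreducible $p$-partial character by \cite[Corollary~3.5]{IsaacsSolv}, and there are exactly $k(\FC)$ of those by \cite[Theorem~3.3]{IsaacsSolv}. That theory is itself proved by an induction through Clifford theory in $\pi$-separable groups of roughly the shape you envision, so your instinct about where $p$-solvability enters is sound; but carrying your plan to completion amounts to reproving that machinery, and as written the decisive step is missing rather than merely compressed.
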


In the last section of this paper, we construct examples of saturated constrained fusion systems with $|\Ind_\FC(P)|>|P|$ by making use of GAP~\cite{GAPnew}.
Since there are only finitely many fusion systems on a given $p$-group $P$, it is clear that $|\Ind_\FC(P)|$ can be bounded by a function in $|P|$. We do not know how to construct such a function explicitly. 
A related question for quasi-projective characters has been raised by Willems--Zalesski~\cite[Question~4.2]{WillemsZalesski}.

In \cite[Conjecture~2.19]{Cantarero2}, Cantarero and Combariza have conjectured that for a saturated fusion system, every $\mathcal{F}$-invariant indecomosable character is a summand of the regular character of $P$. 
In the last section, we exhibit a counterexample to this conjecture. 

\section{The number of indecomposable \texorpdfstring{$\FC$}{F}-invariant characters}

As in the introduction, $\FC$ denotes a fusion system on a finite $p$-group $P$ for the remainder of the paper. 
Our notation for characters follows Navarro's book~\cite{Navarro2}. In particular, if $\chi$ is a character of a group $G$ and $P\le G$, then $\chi_P$ denotes the restriction of $\chi$ to $P$. Moreover, for characters $\chi,\psi$ of $G$, the usual scalar product is denoted by $[\chi,\psi]$.

The following lemma is well-known among experts in lattice theory (it follows from \emph{Gordan's lemma}, see \cite[Theorem~16.4]{Schrijver}), but perhaps less known among representation theorists.

\begin{Lem}\label{lemgordan}
There are only finitely many indecomposable $\FC$-invariant characters of $P$.
\end{Lem}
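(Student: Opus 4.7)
The plan is to recognize the set of $\FC$-invariant characters as the lattice points in a rational polyhedral cone and then invoke Gordan's lemma directly; the indecomposables will emerge as the Hilbert basis of the resulting affine monoid.

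Concretely, I would enumerate $\Irr(P)=\{\chi_1,\dotsc,\chi_n\}$ and identify each character $\sum_i a_i\chi_i$ with its coefficient vector $(a_1,\dotsc,a_n)\in\ZZ_{\geq 0}^n$; the full monoid of characters of $P$ then corresponds to the lattice points $\ZZ^n\cap\RR_{\geq 0}^n$. The condition that $\sum_i a_i\chi_i$ be $\FC$-invariant amounts to finitely many $\QQ$-linear equalities $\sum_i a_i\bigl(\chi_i(x)-\chi_i(y)\bigr)=0$, one for each pair $(x,y)$ of representatives of distinct $P$-conjugacy classes that fuse under $\FC$. Hence the monoid $M$ of (coefficient vectors of) $\FC$-invariant characters equals $\ZZ^n\cap C$, where $C$ is the intersection of $\RR_{\geq 0}^n$ with a rational linear subspace of $\RR^n$; in particular $C$ is a rational polyhedral cone. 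By Gordan's lemma \cite[Theorem~16.4]{Schrijver}, $M$ is a finitely generated submonoid of $\ZZ_{\geq 0}^n$.

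Finally, since $M\subseteq\ZZ_{\geq 0}^n$ is pointed (its only unit is $0$), every finite generating set of $M$ must contain each indecomposable element: if $\chi\in M$ is indecomposable and $\chi=\sum_j c_j\psi_j$ with generators $\psi_j\in M$ and $c_j\in\ZZ_{\geq 0}$, then repeatedly splitting off one $\psi_j$ at a time and applying indecomposability forces $\chi=\psi_j$ for some $j$. Therefore $\Ind_\FC(P)$ sits inside a finite generating set of $M$ and is itself finite. There is no genuine obstacle here; the only minor subtlety is the translation between representation-theoretic and convex-geometric language, and the citation to Schrijver does the real work.
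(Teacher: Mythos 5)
Your proposal is correct, and it takes the route the paper only alludes to (in the sentence citing Gordan's lemma before the statement) rather than the one it actually writes out. The paper's own proof is more elementary and self-contained: it attaches to each $\psi\in\Ind_\FC(P)$ its multiplicity vector $c(\psi)\in\NN_0^k$, notes that distinct indecomposables give incomparable vectors (if $c(\psi)\le c(\psi')$ then $\psi'=(\psi'-\psi)+\psi$ would be a decomposition), and then shows by the iterated-subsequence argument (Dickson's lemma) that every antichain in $\NN_0^k$ is finite. That argument never needs any rationality of the defining conditions; your argument, in exchange, yields the stronger structural statement that the whole monoid of $\FC$-invariant characters is finitely generated with $\Ind_\FC(P)$ as its Hilbert basis, which is exactly the picture the paper exploits later. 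One inaccuracy you should repair: the equalities $\sum_i a_i\bigl(\chi_i(x)-\chi_i(y)\bigr)=0$ are not $\QQ$-linear, since character values of $p$-groups need not be rational (the $D_{16}$ table in Section~3 of the paper contains $\sqrt2$), so it is not immediate that your cone $C$ is rational. The fix is routine: either expand each equation over a $\QQ$-basis of the relevant cyclotomic field to obtain an equivalent rational system, or replace the real solution space $L$ by the real span of $L\cap\QQ^n$; neither change affects the set of integer points, so Gordan's lemma applies as you intend and the rest of your argument (every indecomposable lies in any generating set of the pointed monoid) goes through.
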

\begin{proof}
Let $\Irr(P)=\{\chi_1,\ldots,\chi_k\}$. For $\psi\in\Ind_\FC(P)$ let $c(\psi)=([\psi,\chi_i]:i=1,\ldots,k)\in\NN_0^k$. We define a partial order on $\NN_0^k$ by $a\le b:\Longleftrightarrow b-a\in\NN_0^k$. For distinct characters $\psi,\psi'\in\Ind_\FC(P)$, we have $c(\psi)\nleq c(\psi')$, since otherwise $\psi'=(\psi'-\psi)+\psi$ would be a non-trivial decomposition of $\FC$-invariant characters. Therefore, $\{c(\psi):\psi\in\Ind_\FC(P)\}$ is an antichain in $\NN_0^k$ with respect to $\le$, i.\,e. no two distinct elements are comparable. Therefore, it is enough to show that every antichain in $\NN_0^k$ is finite. 

By way of contradiction, suppose that $c^{(1)},c^{(2)},\ldots$ is an infinite antichain in $\NN_0^k$. We may replace this sequence by an infinite subsequence such that $c^{(1)}_1\le c^{(2)}_1\le\ldots$. This sequence can in turn be replaced by a subsequence such that $c^{(1)}_2\le c^{(2)}_2\le\ldots$. Repeating this process $k$ times yields an infinite sequence $c^{(1)}\le c^{(2)}\le\ldots$. But this is impossible since the original sequence was an antichain. 
\end{proof}

Since for every $k\ge 2$, the poset $\NN_0^k$ contains antichains of arbitrary finite lengths (e.\,g. $(n,1,*,\ldots,*)$, $(n-1,2,*,\ldots,*),\ldots$ for any $n\in\NN$), it is not easy to give an upper bound on $|\Ind_\FC(P)|$.

We now prove the first theorem stated in the introduction.

\begin{proof}[Proof of \autoref{thmHilbert}]
By a theorem of Park~\cite{ParkExoticity2}, there exists a finite group $G$ such that $P\le G$ and the morphisms of $\FC$ are induced by conjugation in $G$. In particular, $k(\FC)$ is the number of $G$-conjugacy classes which intersect $P$. Let $T$ be the part of the character table of $G$, whose columns correspond to elements in $P$. Since the character table is invertible, $T$ has full rank. Hence, the ($G$-invariant) restrictions $\chi_P$ for $\chi\in\Irr(G)$ span the space of $G$-invariant class functions on $P$. Since each $\chi_P$ can be decomposed into $G$-invariant indecomposable characters, the claim follows.
\end{proof}

Next we restrict ourselves to saturated fusion systems arsing from a finite group with Sylow $p$-subgroup $P$ (those fusion systems are sometimes called \emph{non-exotic}). Here we can prove a stronger theorem, which resembles the fact that Brauer characters are restrictions of generalized characters (see \cite[Corollary~2.16]{Navarro}). 

\begin{Thm}\label{restriction}
Let $G$ be a finite group with Sylow $p$-subgroup $P$. Then every $G$-invariant character $\zeta$ of $P$ is the restriction of a generalized character of $G$. 
\end{Thm}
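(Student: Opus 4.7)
The plan is to construct an explicit class function $\tilde\zeta$ on $G$ extending $\zeta$ and verify via Brauer's induction theorem that $\tilde\zeta$ is a generalized character of $G$. Specifically, I would set $\tilde\zeta(g):=\zeta(g_p)$, where $g_p$ denotes the $p$-part of $g\in G$. Since every $p$-element of $G$ lies in some Sylow $p$-subgroup and is therefore $G$-conjugate to an element of $P$, the $G$-invariance of $\zeta$ guarantees that the value $\zeta(g_p)$ is independent of the choice of $G$-conjugate representative in $P$. A direct check shows that $\tilde\zeta$ is a class function on $G$ and that $\tilde\zeta|_P=\zeta$ (since elements of $P$ coincide with their own $p$-part).

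To prove that $\tilde\zeta$ is a generalized character I would invoke Brauer's characterization of characters: it suffices to show that $\tilde\zeta|_E$ is a generalized character of $E$ for every elementary subgroup $E=Q\times C$ of $G$, where $Q$ is a $q$-group and $C$ is a cyclic group of order coprime to $q$. The key observation is that the restriction of the $p$-part map $g\mapsto g_p$ to $E$ is a group homomorphism from $E$ onto a $p$-subgroup of $E$: this uses that $C$ is abelian, so the $p$-part defines a homomorphism on $C$, while the $p$-part on $Q$ is either the identity map (if $q=p$) or trivial (if $q\ne p$). Consequently, $\tilde\zeta|_E$ factors through this $p$-part homomorphism.

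Concretely, if $q=p$ I would choose $g\in G$ with $Q^g\le P$; then for $(x,y)\in Q\times C$ one has $\tilde\zeta|_E(x,y)=\zeta(x^g)$, and $\tilde\zeta|_E$ is the inflation to $E$ through $E\to E/C\cong Q$ of the character $x\mapsto\zeta(x^g)$ of $Q$, which is indeed a character, being obtained from $\zeta|_{Q^g}$ by the group isomorphism $Q\to Q^g$, $x\mapsto x^g$. If $q\ne p$, let $C_p\le C$ be the (cyclic) Sylow $p$-subgroup of $C$ and choose $g\in G$ with $C_p^g\le P$; then $\tilde\zeta|_E(x,y)=\zeta(y_p^g)$ is the inflation along the homomorphism $(x,y)\mapsto y_p$ of the character $c\mapsto\zeta(c^g)$ of $C_p$. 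In both cases $\tilde\zeta|_E$ is a generalized character of $E$, and Brauer's theorem finishes the argument.

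The main technical point to handle is well-definedness: the $G$-invariance of $\zeta$ must be invoked repeatedly to ensure that the value of $\zeta$ at a $G$-conjugate of a $p$-element does not depend on the chosen conjugator, and this hypothesis is precisely what lets one identify $\tilde\zeta|_E$ with an inflated character from a suitable $p$-subgroup of $E$ in both cases above. Once this bookkeeping is in place, everything else is routine.
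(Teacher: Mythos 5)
Your proposal is correct and follows essentially the same route as the paper: you extend $\zeta$ to a class function on $G$ by evaluating it at (a $P$-conjugate of) the $p$-part of each element and then verify Brauer's characterization of characters by exhibiting each restriction as an inflation of a character of a $p$-subgroup. The only cosmetic difference is that the paper checks restrictions to nilpotent subgroups (where the $p$-part map is immediately a homomorphism onto the normal Sylow $p$-subgroup), whereas you check Brauer elementary subgroups $Q\times C$, which forces the two-case analysis you carry out; both verifications succeed for the same reason.
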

\begin{proof}
We extend $\zeta$ to a class function $\hat\zeta$ of $G$ in the following way: Every $g\in G$ is conjugate to an element of the form $xy=yx$ where $x\in P$ and $y$ is a $p'$-element. We define $\hat\zeta(g):=\zeta(x)$ (this is well-defined since $\zeta$ is $G$-invariant). Now we use Brauer's characterization of characters to show that $\hat\zeta$ is a generalized character of $G$ (see \cite[Corollary~7.12]{Navarro2}). To this end, let $N\le G$ be a nilpotent subgroup with Sylow $p$-subgroup $Q\unlhd N$. After conjugation, we may assume that $Q\le P$. Then $\hat\zeta_Q=\zeta_Q$ is a character of $Q\cong N/\pcore_{p'}(N)$ and $\hat\zeta_N$ is the inflation of $\zeta_Q$ to $N$. In particular, $\hat\zeta_N$ is a (generalized) character of $N$. Hence, $\hat\zeta$ is a generalized character of $G$, which restricts to $\zeta$. 
\end{proof}

Obviously, every $G$-invariant character of $P$ is a summand of a restriction of a character of $G$. However, an indecomposable character is not necessarily a summand of a restriction of an irreducible character of $G$. A counterexample will be given in the last section of the paper.

The following lemma of Cantarero--Combariza~\cite[Corollary~2.9]{Cantarero2} characterizes equality in \autoref{thmHilbert}.
We include the short proof for the convenience of the reader.

\begin{Lem}\label{lemunique}
For every fusion system $\FC$ on $P$ we have $|\Ind_\FC(P)|=k(\FC)$ if and only if every $\FC$-invariant character of $P$ can be decomposed \emph{uniquely} into indecomposable characters.
\end{Lem}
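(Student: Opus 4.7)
The plan is to work with dimensions of the space $V$ of $\FC$-invariant class functions of $P$. By definition, $\dim V = k(\FC)$. \autoref{thmHilbert} states that $\Ind_\FC(P)$ already spans $V$, so $|\Ind_\FC(P)|\ge k(\FC)$ with equality if and only if $\Ind_\FC(P)$ is linearly independent over $\CC$, that is, is a basis of $V$.

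For the ``only if'' direction, I would argue: if $|\Ind_\FC(P)|=k(\FC)$, then $\Ind_\FC(P)$ is a basis of $V$, so every element of $V$ has a unique expression as a $\CC$-linear combination of elements of $\Ind_\FC(P)$; in particular the unique such expression for an $\FC$-invariant character $\chi$ must coincide with any $\NN_0$-decomposition of $\chi$ into indecomposables, giving uniqueness.

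For the ``if'' direction, suppose $|\Ind_\FC(P)|>k(\FC)$, so the finite set $\Ind_\FC(P)=\{\psi_1,\dots,\psi_m\}$ is linearly dependent. Since each $\psi_i$ has non-negative integer coordinates in the basis $\Irr(P)$ of the space of all class functions of $P$, the $\psi_i$ lie in the lattice $\ZZ^{|\Irr(P)|}$; any $\CC$-linear dependence among them can then be scaled to an integral one, yielding $a_i,b_j\in\NN_0$ and disjoint index sets $A,B$ with $\sum_{i\in A}a_i\psi_i=\sum_{j\in B}b_j\psi_j$ and not all $a_i,b_j$ zero. Both sides are then the same $\FC$-invariant character admitting two distinct decompositions into elements of $\Ind_\FC(P)$, contradicting the uniqueness assumption.

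The proof is essentially a linear-algebra dichotomy; the only step that requires a moment's care is the rationality of the linear dependence in the converse direction, which is handled by observing that $\psi_i\mapsto([\psi_i,\chi_1],\ldots,[\psi_i,\chi_k])$ embeds $\Ind_\FC(P)$ into the integer lattice $\ZZ^k$, so a $\CC$-dependence descends to a $\ZZ$-dependence by Gaussian elimination and clearing denominators.
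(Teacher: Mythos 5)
Your proof is correct and follows essentially the same route as the paper: both directions rest on the dimension count $\dim V=k(\FC)$ together with \autoref{thmHilbert}, and the converse produces an integral linear dependence whose positive and negative parts give two distinct decompositions. The only point you leave implicit (which the paper states explicitly) is that the coefficients cannot all have the same sign --- equivalently, neither side of your identity $\sum_{i\in A}a_i\psi_i=\sum_{j\in B}b_j\psi_j$ is zero --- which follows at once from the positivity of character degrees.
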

\begin{proof}
If $|\Ind_\FC(P)|=k(\FC)$, then $\Ind_\FC(P)$ is a basis of the space of $\FC$-invariant class functions and the result follows. 
Now assume that $|\Ind_\FC(P)|>k(\FC)$. Since the dimension of the $\QQ$-vectorspace spanned by $\Ind_\FC(P)$ is bounded by $k(\FC)$, the set $\Ind_\FC(P)$ is linearly dependent over $\QQ$. Hence, there exist integers $c_\psi\in\ZZ$ (not all zero) such that 
\[\sum_{\psi\in\Ind_\FC(P)}c_\psi\psi=0.\] 
Since the degree of each character is positive, not all $c_\psi$ can have the same sign. 
If we bring the negative coefficients to the right hand side, we end up with two distinct decompositions of an $\FC$-invariant character.
\end{proof}

We turn to the proof of our second main theorem.

\begin{proof}[Proof of \autoref{psolv}]
We apply Isaacs' theory of $\pi$-partial characters, where $\pi=\{p\}$ (see \cite[p.~71]{IsaacsSolv}). 
Every indecomposable $\FC$-invariant character $\chi$ of $P$ extends uniquely to a class function $\hat{\chi}$ on the set of $p$-elements of $G$. By \cite[Corollary~3.5]{IsaacsSolv}, $\hat{\chi}$ is an irreducible $p$-partial character of $G$. The number of those characters is exactly $k(\FC)$ by \cite[Theorem~3.3]{IsaacsSolv}.
\end{proof}

We remark that every fusion system of a $p$-solvable group is constrained. Conversely, by the model theorem~\cite[Theorem~I.4.9]{AKO}, every constrained fusion system is realized by a $p$-constrained group. However, \autoref{psolv} does not hold for constrained fusion systems in general as we will see in the next section.

As a consequence of \autoref{psolv}, we obtain the following extension of some results in \cite{Cantarero2}.

\begin{Thm}
Let $\FC$ be the (saturated) fusion system on a Sylow $p$-subgroup $P$ of a $p$-solvable group. Then every indecomposable $\FC$-invariant character of $P$ is a summand of the regular character of $P$. 
\end{Thm}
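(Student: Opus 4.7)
The regular character $\rho_P$ is $\FC$-invariant, since it takes the value $|P|$ at the identity and vanishes elsewhere. By \autoref{psolv} combined with \autoref{lemunique}, $\rho_P$ admits a unique decomposition
\[\rho_P=\sum_{\psi\in\Ind_\FC(P)}a_\psi\,\psi,\qquad a_\psi\in\NN_0,\]
and we must prove $a_\psi\ge 1$ for every indecomposable $\psi$. Equivalently, $\rho_P-\psi$ must be an honest character of $P$, which amounts to the multiplicity bound $[\psi,\chi]\le\chi(1)$ for every $\chi\in\Irr(P)$.

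The plan is to use the lifting provided in the proof of \autoref{psolv}. That argument shows each $\psi\in\Ind_\FC(P)$ extends to an irreducible $p$-partial character $\hat\psi$ of $G$. By Isaacs' lifting theorem for $p$-separable groups (see \cite{IsaacsSolv}), there is a unique $\xi_\psi\in\Irr(G)$ (a so-called $B_p$-character) with $\xi_\psi^0=\hat\psi$, and since every element of $P$ is a $p$-element this yields $\xi_\psi|_P=\psi$. It therefore suffices to prove that the restriction of any $B_p$-character of $G$ to $P$ is a summand of~$\rho_P$.

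For this last step I would proceed by induction on $|G|$, exploiting that $G$ is $p$-solvable. The base case $G=P$ is immediate, since then $\Ind_\FC(P)=\Irr(P)$ and every $\chi\in\Irr(P)$ occurs in $\rho_P$ with multiplicity $\chi(1)\ge 1$. For the inductive step, pick a minimal normal subgroup $N\trianglelefteq G$; by $p$-solvability $N$ is either a $p$- or a $p'$-group. Clifford theory relative to $N$ reduces $\xi_\psi$ either to a $B_p$-character of a strictly smaller inertia subgroup (if some $\theta\in\Irr(N)$ below $\xi_\psi$ fails to be $G$-invariant) or to a $B_p$-character of the quotient $G/N$ (if $\theta$ is $G$-invariant). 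Tracking the Sylow $p$-subgroups through these reductions and applying Mackey's formula then yields the required multiplicity bound.

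The main obstacle is carrying out the Clifford reduction while preserving both the $B_p$-property of the lift and the multiplicity bound on the restriction to $P$. A potentially cleaner route uses Isaacs' nucleus construction, which represents each $\xi\in B_p(G)$ as an induced character $\beta^G$ from a particular subgroup $U\le G$; the explicit form of $\beta$ then permits a direct Mackey computation of $\xi|_P$ and avoids the need for a delicate induction on~$|G|$.
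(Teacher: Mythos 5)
Your opening reduction is fine: by \autoref{psolv} and \autoref{lemunique} the decomposition of the regular character $\rho$ of $P$ is unique, so it suffices to show that $\rho-\psi$ is a character, i.e. $[\psi,\chi]\le\chi(1)$ for all $\chi\in\Irr(P)$. The observation that each $\psi\in\Ind_\FC(P)$ is the restriction to $P$ of a $B_p$-character of $G$ is also correct, and is a nice sharpening of \autoref{restriction} in the $p$-solvable case. But from that point on the argument is a plan, not a proof. The claim you reduce to --- that the restriction to $P$ of any $B_p$-character of $G$ is a summand of $\rho$ --- is, in view of \autoref{psolv}, exactly equivalent to the theorem itself, because $\xi\mapsto\xi^0\mapsto\xi_P$ is a bijection from $B_p(G)$ onto $\Ind_\FC(P)$; you have translated the statement rather than proved it. The proposed induction on $|G|$ is not carried out: you do not explain how the Clifford reduction to an inertia subgroup or to $G/N$ interacts with the chosen Sylow $p$-subgroup, nor why the multiplicity bound survives the reduction, and you yourself flag this as ``the main obstacle''. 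The alternative route via Isaacs' nucleus is likewise only named. As it stands, the key inequality $[\xi_P,\lambda]\le\lambda(1)$ for $\xi\in B_p(G)$ and $\lambda\in\Irr(P)$ is unproved.

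The gap is avoidable, and you already had all the needed ingredients in your first paragraph. Put $m:=\max\{[\psi,\chi]:\chi\in\Irr(P)\}$. Then $[\psi,\chi]\le m\le m\chi(1)=[m\rho,\chi]$ for every $\chi\in\Irr(P)$, so $\psi$ is a summand of $m\rho$ and therefore occurs in some decomposition of $m\rho$ into indecomposable $\FC$-invariant characters. By \autoref{psolv} and \autoref{lemunique} this decomposition is unique, and it is also obtained by repeating $m$ times the decomposition of $\rho$ (which is $\FC$-invariant since it vanishes off the identity). Hence $\psi$ already occurs in the decomposition of $\rho$. The trick, which is the paper's actual argument, is to pass from $\rho$ to $m\rho$ so that the multiplicity bound becomes automatic instead of something to be established by a separate (and here quite delicate) Clifford-theoretic induction.
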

\begin{proof}
This follows from \autoref{psolv} and \cite[Remark~2.18]{Cantarero2}. For the convenience of the reader we repeat the short proof of the latter result: Let $\psi$ be an indecomposable $\FC$-invariant character of $P$. Let 
\[m:=\max\bigl\{[\psi,\chi]:\chi\in\Irr(P)\bigr\}.\] 
Then $\psi$ is a summand of $m\rho$, where $\rho$ is the regular character of $P$. By the hypothesis and \autoref{lemunique}, $m\rho$ has a unique decomposition into indecomposable $\FC$-invariant characters. Since $\rho$ itself is $\FC$-invariant (remember that $\rho(x)=0$ for all $x\in P\setminus\{1\}$), $\psi$ must appear as a summand of $\rho$. 
\end{proof}

\section{Counterexamples}

In \cite[table on p.~5206]{Cantarero2} and \cite{CantareroSym}, the authors list some fusion systems $\FC$ where $|\Ind_\FC(P)|>k(\FC)$, including the system on $P\cong D_{16}$ of the group $\PSL(2,17)$. This fusion system has two conjugacy classes of essential subgroups. The authors seem to have overlooked the “smaller” fusion system of $\PGL(2,7)$ with only one class of essential subgroups (still on $D_{16}$). With the notation 
\[P=\langle x,y\mid x^8=y^2=1,\ x^y=x^{-1}\rangle,\] 
the character table of $P$ is:
\[
\begin{array}{c|c|c|c|c|cc|c}
&1&x&x^3&x^2&x^4&y&xy\\\hline
\chi_1&1&1&1&1&1&1&1\\
\chi_2&1&-1&-1&1&1&1&-1\\
\chi_3&1&-1&-1&1&1&-1&1\\
\chi_4&1&1&1&1&1&-1&-1\\
\chi_5&2&0&0&-2&2&0&0\\
\chi_6&2&\sqrt{2}&-\sqrt{2}&0&-2&0&0\\
\chi_7&2&-\sqrt{2}&\sqrt{2}&0&-2&0&0
\end{array}
\]
We may assume that $x^4$ and $y$ are $\FC$-conjugate, but the other classes of $P$ are not fused. The $\FC$-invariant characters of $P$ must agree on the fifth and sixth column of the character table. Hence, we are looking for non-negative integral vectors orthogonal to $(0,0,1,1,1,-1,-1)$. Now it is easy to see that
\[\Ind_\FC(P)=\{\chi_1,\ \chi_2,\ \chi_3+\chi_6,\ \chi_3+\chi_7,\ \chi_4+\chi_6,\ \chi_4+\chi_7,\ \chi_5+\chi_6,\ \chi_5+\chi_7\}.\]
In particular, $|\Ind_\FC(P)|=8>6=k(\FC)$.

To turn this into a constrained fusion system, we set $G:=\PGL(2,7)$ and choose an irreducible faithful $\FF_2G$-module $V$ of dimension $6$. Then 
\[\hat{G}:=V\rtimes G=\mathrm{PrimitiveGroup}(64,64)=\mathrm{TransitiveGroup}(16,1802)\] 
(notation from GAP~\cite{GAPnew})
is a $2$-constrained group with Sylow $2$-subgroup $\hat{P}:=V\rtimes P$. Let $\hat\FC$ be the corresponding constrained fusion system. The inflations of the eight $G$-invariant indecomposable characters of $P$ are $\hat{\FC}$-indecomposable.
By the proof of \autoref{thmHilbert}, we may construct further $\hat\FC$-indecomposable characters by restricting characters $\chi\in\Irr(G)$ with $V\nsubseteq\Ker(\chi)$ to $\hat{P}$. The space spanned by those restrictions has dimension at least $k(\hat\FC)-k(\FC)=k(\hat\FC)-6$. In particular, $|\Ind_\FC(\hat{P})|>k(\hat\FC)$. 

Finally, we provide a counterexample to \cite[Conjecture~2.19]{Cantarero2} as claimed in the introduction. Let $\FC$ be the fusion system on a Sylow $2$-subgroup $P$ of the automorphism group of the Mathieu group $G=\Aut(M_{22})\cong M_{22}\rtimes C_2$. Then $|P|=2^8$. Let $\Irr(G)=\{\chi_1,\ldots\chi_{21}\}$ and $\Irr(P)=\{\lambda_1,\ldots,\lambda_{34}\}$. 
It can be checked with GAP that $k(\FC)=10$.
Let 
\[A:=\bigl([(\chi_j)_P,\lambda_i]\bigr)_{i,j}\in\ZZ^{34\times 21}.\]  
By \autoref{restriction}, every $\zeta\in\Ind_\FC(P)$ is the restriction of some generalized character $\psi$ of $G$. 
Setting $x:=([\psi,\chi_i])_i\in\ZZ^{21}$, we obtain $Ax=([\zeta,\tau_i])_i\ge 0$. Hence, $x$ belongs to the semigroup
\[S:=\{x\in\ZZ^{21}:Ax\ge0\}.\]
Moreover, since $\zeta$ is indecomposable, $x$ is a member of a Hilbert basis $H$ of $S$. We remark that $H$ is not unique, because there exist vectors $y$ with $Ay=0$. However, if $y\in H$ satisfies $Ax=Ay$, then $x=y$, since otherwise $x=(x-y)+y$ would be a non-trivial decomposition of $x$ in $S$. In this way, $H$ corresponds to $\Ind_\FC(P)$.
Using the \texttt{nconvex}-package~\cite{NConvex} in GAP, we compute $H$ and obtain $|H|=|\Ind_\FC(P)|=25$. The source code is available at \cite{github}. 
Moreover, $14$ indecomposable $\FC$-invariant characters are not summands of the regular character of $P$ and six are not summands of restrictions of irreducible characters of $G$. It would take too much space to print these characters here, but we exhibit at least one indecomposable character for illustration:
\[\zeta:=\lambda_1+\lambda_2+\lambda_3+2\lambda_4+\lambda_5+\lambda_6+\lambda_7+2\lambda_8+\lambda_9+2\lambda_{10}+\lambda_{11}+2\lambda_{12}.\]
The labeling is chosen in such a way that $\lambda_1,\ldots,\lambda_4$ have degree $1$, $\lambda_5,\ldots,\lambda_8$ have degree $2$, $\lambda_9,\lambda_{10}$ have degree $4$, and $\lambda_{11},\lambda_{12}$ have degree $8$. Since $\lambda_4$ occurs with multiplicity $2$, $\zeta$ is not a summand of the regular character of $P$.

The symmetric group $G=S_{12}$ is a counterexample for $p=2,3$. As promised in the introduction, $G=S_{10}$ for $p=2$ provides an example where $|\Ind_\FC(P)|=266>256=|P|$.

\section*{Acknowledgment}
I thank Gabriel Navarro for providing the idea for the proof of \autoref{psolv}. I became aware of the relevance of lattice theory (Hilbert bases) in representation theory by a talk of Wolfgang Willems in Jena, 2015. I appreciate a very detailed report (pointing out an error in a proof) by an anonymous referee.


\begin{thebibliography}{10}

\bibitem{AKO}
M. Aschbacher, R. Kessar and B. Oliver, \textit{Fusion systems in algebra and
  topology}, London Mathematical Society Lecture Note Series, Vol. 391,
  Cambridge University Press, Cambridge, 2011.

\bibitem{Cantarero}
N. B\'{a}rcenas and J. Cantarero, \textit{A completion theorem for fusion
  systems}, Israel J. Math. \textbf{236} (2020), 501--531.

\bibitem{BrouePuigA}
M. Broué and L. Puig, \textit{Characters and local structure in
  {$G$}-algebras}, J. Algebra \textbf{63} (1980), 306--317.

\bibitem{Cantarero2}
J. Cantarero and G. Combariza, \textit{Uniqueness of factorization for
  fusion-invariant representations}, Comm. Algebra \textbf{51} (2023),
  5187--5208.

\bibitem{CantareroSym}
J. Cantarero and J. Gaspar, \textit{Fusion-invariant representations for
  symmetric groups}, Bull. Iran. Math. Soc. \textbf{50} (2024), no. 29.

\bibitem{GAPnew}
The GAP~Group, \textit{GAP -- Groups, Algorithms, and Programming, Version
  4.12.2}; 2022, (\url{http://www.gap-system.org}).

\bibitem{IsaacsSolv}
I.~M. Isaacs, \textit{Characters of solvable groups}, Graduate Studies in
  Mathematics, Vol. 189, American Mathematical Society, Providence, RI, 2018.

\bibitem{Navarro}
G. Navarro, \textit{Characters and blocks of finite groups}, London
  Mathematical Society Lecture Note Series, Vol. 250, Cambridge University
  Press, Cambridge, 1998.

\bibitem{Navarro2}
G. Navarro, \textit{Character theory and the {M}c{K}ay conjecture}, Cambridge
  Studies in Advanced Mathematics, Vol. 175, Cambridge University Press,
  Cambridge, 2018.

\bibitem{ParkExoticity2}
S. Park, \textit{Realizing fusion systems inside finite groups}, Proc. Amer.
  Math. Soc. \textbf{144} (2016), 3291--3294.

\bibitem{NConvex}
K. Saleh and S. Gutsche, \textit{{NConvex}, A Gap package to perform polyhedral
  computations, {V}ersion 2022.09-01},
  \url{https://homalg-project.github.io/pkg/NConvex}.

\bibitem{github}
B. Sambale, \textit{GAP code to compute indecomposable F-invariant characters},
  \href{https://github.com/BrauerSuzuki/GAP-codes/blob/main/fusionInvariant.g}{\url{https://github.com/BrauerSuzuki/GAP-codes/}}.

\bibitem{Schrijver}
A. Schrijver, \textit{Theory of linear and integer programming},
  Wiley-Interscience Series in Discrete Mathematics, John Wiley \& Sons, Ltd.,
  Chichester, 1986.

\bibitem{WillemsZalesski}
W. Willems and A.~E. Zalesski, \textit{Quasi-projective and quasi-liftable
  characters}, J. Algebra \textbf{442} (2015), 548--559.

\end{thebibliography}
\end{document}